\documentclass[11pt]{article}
\usepackage[margin=1in]{geometry}
\usepackage[numbers,sort&compress]{natbib}
\usepackage{amsmath,amssymb,amsthm,mathtools}
\usepackage{bm}
\usepackage{microtype}
\usepackage[hidelinks]{hyperref}
\usepackage{enumitem}

\newtheorem{theorem}{Theorem}
\newtheorem{proposition}[theorem]{Proposition}

\newtheorem{remark}[theorem]{Remark}
\newtheorem{definition}[theorem]{Definition}

\newcommand{\R}{\mathbb R}

\newcommand{\Z}{\mathbb Z}
\newcommand{\ii}{\mathrm i}
\newcommand{\cL}{\mathcal L}
\newcommand{\cD}{\mathcal D}
\newcommand{\norm}[1]{\lVert #1\rVert}
\newcommand{\Hper}[1]{H^{#1}_{\#}(0,T;H)}
\newcommand{\Lone}{L^1(0,T;H)}
\newcommand{\Om}{\omega}

\author{Giovanni P. Galdi \and Boris Muha \and Justin T. Webster}
\title{From Polynomial Stability to Periodic Well-posedness\\in Partially Dissipative Systems}
\date{}

\begin{document}
\maketitle

\begin{abstract}
\noindent 
The study of resonances (and well-posedness) for complex systems under time-periodic loading is of broad interest in application. The work of Galdi et al.~(2014) connects asymptotic stability of solutions to an unforced Cauchy problem to solvability of the  time-periodic forced problem. Uniform stability of the solution semigroup gives periodic well-posedness for all forces in the natural mild forcing class, whereas strong stability yields only existence of a dense set of forcings for which resonance can be excluded. We address an intermediate regime for polynomial (also: rational or semiuniform) stability. Working with a Fourier decomposition in Hilbert space, we demonstrate that polynomial stability of the semigroup yields an explicit characterization of the dense forcing set on which periodic well-posedness holds. More precisely, resolvent bounds translate directly into certain losses of time derivatives on the forcing required to ensure well-posedness. Our result is motivated by partially dissipative models---including the famous heat-wave interaction problem idealizing fluid-structure interactions, as well as some thermoelastic,  viscoelastic, and weakly damped hyperbolic systems---for which polynomial decay is the natural regime.
\end{abstract}

\section{Introduction}
The emergence of resonance in systems driven by time-periodic forcing is a classical but delicate issue arising in continuum mechanics, control theory, and the analysis of partial differential equation (PDE) systems. Resonance appears within a class of time-periodic well-posedness problems, where the central question is the following: {\em For an admissible class of forcing functions, for which periods $T$ and which forcings $F_T$ does the system admit a unique, continuous $T$-periodic solution?} We may then view resonance as the existence of a pair $(T,F_T)$ for which the response is not periodic and unbounded in time. Thus, for problems which exhibit continuous dependence on the data, periodic well-posedness in a given forcing class is a demonstration of non-resonance therein. This is the point of view adopted in \cite{Galdi}, and it is the one we take here.

The primary motivation for this study comes from {\em partially dissipative systems} (PDS), where dissipation is spatially or systemically localized. One may apply periodic forcing in such a system and ask whether the dissipation is ``strong enough" to preclude resonances associated to the undamped system components. Typical examples include thermoelastic and viscoelastic dynamics, heat-wave (hyperbolic-parabolic) systems, indirectly damped systems, acoustic chambers, and fluid-structure interactions \cite{Galdi}. In these settings, the prominence of damping is intimately connected to the stability type of the unforced dynamics. One rarely expects uniform (or exponential) decay, while the presence of dissipation does typically yield trajectory-by-trajectory (or strong) decay. The natural large-time regime is therefore often that of \emph{polynomial decay}, also called \emph{rational decay} or \emph{semiuniform stability}, which lies between the notions of strong and uniform stability; this note elucidates the corresponding periodic well-posedness picture.

The prevalence of polynomial stability is reflected in a broad literature. Representative works include indirectly damped or locally coupled wave-type systems \cite{ACK,LiHanXu}, wave equations with acoustic or dynamic boundary conditions \cite{WehbeAML}, Bresse and Timoshenko structures \cite{FatoriMonteiro,CuiChai}, thermoelastic systems \cite{Leseduarte,PataSurvey,DellOroLasieckaPata}, heat-wave or heat-structure transmission problems \cite{Zhang2003,RZZ,AvalosTriggiani2013,ALT,MMSW}, and Stokes-Lam\'e fluid-structure interactions \cite{AT2009,AB2015}. Thus, within the broad class of PDS exhibiting polynomial decay, it is natural to ask whether resonance can occur.

The framework of \cite{Galdi} directly connects semigroup stability to periodic well-posedness. Earlier links between dissipation and recurrent response appeared in abstract setting such as \cite{StraskrabaVejvoda1973}, while non-resonance results for dissipative wave equations \cite{Haraux1985} made explicit that sufficient dissipation suppresses resonance. The semigroup-level criterion of \cite{Galdi} (Proposition \ref{prop:Galdi} below) connects uniform or strong stability of the underlying semigroup to specific notions of periodic well-posedness. This yields a compelling criterion for the exclusion of resonance through semigroup stability results. Yet \cite{Galdi} says nothing about the large intermediate regime between strongly and uniformly stable systems, precisely the natural setting for PDS. 

We provide a direct answer in this setting. By the celebrated Proposition \ref{prop:BT} below, polynomial decay of the semigroup is equivalent to polynomial growth of the resolvent on $i\mathbb R$. If one expands the periodic problem in temporal Fourier modes, the resolvent exponent appears clearly as a Sobolev loss of time derivatives in the periodic data-to-solution map. Thus polynomial stability yields a concrete forcing class  admitting periodic well-posedness. As polynomial stability sits between strong and uniform stability, our result below fits naturally between the endpoint cases given in \cite{Galdi}. 

This connection between polynomial stability and Sobolev loss in the periodic setting was suggested by \cite{MMSW}, focusing on a characteristic heat-wave interaction which is known to exhibit polynomial stability \cite{ALT,AvalosTriggiani2013,Zhang2003,RZZ}. In \cite{MMSW}, time-periodic solutions were constructed through control techniques in the time domain, and the admissible forcing classes were identified through observability-type estimates. Derivative losses in this setting are explicitly tied to geometry and coupling, and finite-energy periodic solutions are recovered  from partial dissipation via time differentiations of the system. 

The present note isolates the corresponding abstract mechanism. Our observation is simple but striking: {\em For time-Fourier modes, the derivative loss encoded in polynomial resolvent bounds identifies a forcing class for periodic well-posedness.} We  emphasize that \cite{Galdi}  uses finite trigonometric polynomials as an admissible class for certain structured examples, solving mode-by-mode. Yet in these cases no uniform, quantitative estimate on solutions is provided, hence no explicit ``infinite-mode'' forcing class is obtained. Finally, because we insist on finite-energy solutions, the forcing must move \emph{up} the regularity scale from standard mild forcing in $L^1(0,T;H)$. There are weaker notions of solution beyond the finite-energy class (see \cite[Appendix B]{MMSW}), but understanding those classes---and their relation to resonance---is part of the broader program motivating this note.

\section{Notions of Stability and Periodic Well-posedness}
For the remainder of this note, let $H$ be a Hilbert space and let $A:\cD(A)\subset H\to H$ generate a bounded $C_0$-semigroup $S(t)=e^{At}$ on $H$.
\begin{definition}\label{1}
We say that $S(t)$ is:
\begin{enumerate}[label={\rm(\roman*)},leftmargin=2.2em]
  \item \emph{strongly stable}, if $S(t)x\to 0$ in $H$ for every $x\in H$;
  \item \emph{uniformly stable}, if $\norm{S(t)}_{\cL(H)}\to 0$ as $t\to\infty$;
  \item \emph{polynomially (or rationally) stable of order $1/\alpha$} if $0\in\rho(A)$ and, for some $\alpha>0$,
  \[
  \norm{S(t)A^{-1}}_{\cL(H)}\le C(1+t)^{-1/\alpha},\qquad t\ge 0.
  \]
\end{enumerate}
\end{definition}

Strong stability is common and is often obtained by  ``soft'' methods---compactness, unique continuation, or spectral contradiction arguments. Uniform stability is  stronger and typically requires hard analysis, for instance via the Gearhart-Pr\"uss theorem \cite{Pruss1984} where global control of the resolvent $\rho(A)$ on $i\mathbb R$ must be obtained. Polynomial stability interpolates between these notions and records a quantified derivative loss at the spectral level. In PDE models, this loss is a spectral reflection of a familiar energy-method phenomenon: because dissipation acts only on part of the system, one must differentiate the system or reconstruct the dynamics through the coupling to recover finite-energy level control. The present theorem makes this derivative loss  directly visible at the periodic level.

Periodic well-posedness should be understood relative to a forcing class, $X_T$. Recalling the semigroup generator $A:\cD(A)\subset H\to H$, we  consider the equation:
\begin{equation}\label{equation}
U_t=AU+F.
\end{equation}
Periodic well-posedness amounts to the existence of a unique $T$-periodic mild solution in $C_{\#}([0,T];H)$ for \eqref{equation}.\footnote{We require that the solution must be continuous in time so that the time traces at $t=0,T$ can be interpreted and equated. Mild solutions satisfy the variation of parameters formula \cite{Galdi}.} 
 In this setting there are three natural regimes:
\begin{enumerate}[leftmargin=2.2em]\setlength\itemsep{.01cm}
\item[(a)] unconditional well-posedness; namely, every $F\in X_T$ gives rise to a unique $T$-periodic solution;
\item[(b)] periodic well-posedness for $F$ in an abstract dense subset $\mathcal Q \subset X_T$;
\item[(c)] periodic well-posedness for a dense $\mathcal Q$ which is characterized explicitly (by additional regularity).
\end{enumerate}
Once a problem is periodically well-posed in some class, resonance is excluded therein by the principle of superposition and the boundedness of $S(t)$. This is to say, no other (non-time periodic) solution may grow unboundedly in time away from the unique periodic solution. 

Uniform and strong stability of $S(t)=e^{At}$ in Definition \ref{1} are tied to items (a) and (b) above via Proposition \ref{prop:Galdi} below. As we consider mild solutions in $C_{\#}([0,T];H)$, we will consider the ambient class of forcing functions to be $X_T=L^1(0,T;H)$.\footnote{Note that functions in $X_T$ can be identified with their unique $T$-periodic a.e. extensions to all of $\mathbb{R}$; for instance, $L^1_{\#}(0,T;H)$ denotes the space of $L^1(0,T;H)$ functions extended $T$-periodically to a.e. $t \in \mathbb{R}$.}
\begin{proposition}[Theorems 3.4 and 3.6, \cite{Galdi}] \label{prop:Galdi} 
Assume, additionally, that $S(t)$ is a contraction semigroup. Then:
\begin{enumerate}[label={\rm(\roman*)},leftmargin=2.2em]
\item If $S(t)$ is uniformly stable, then for all $T>0$,  \eqref{equation} is unconditionally periodically well-posed.
\item If $S(t)$ is strongly stable, then for all $T>0$ there exists a dense subset $\mathcal Q\subset \Lone$ such that \eqref{equation} is periodically well-posed.
\end{enumerate}
\end{proposition}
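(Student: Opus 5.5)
Both parts rest on one reduction. A $T$-periodic mild solution is exactly a mild solution with $U(0)=U(T)$. By the variation of parameters formula,
\[
U(T)=S(T)U(0)+\int_0^T S(T-s)F(s)\,ds,
\]
so periodic well-posedness is equivalent to unique solvability of
\begin{equation*}
(I-S(T))x=y_F,\qquad y_F:=\int_0^T S(T-s)F(s)\,ds .
\end{equation*}
The argument therefore has three steps: (1) show that $I-S(T)$ is injective; (2) identify when $y_F\in\mathrm{Ran}(I-S(T))$; (3) set $U(t)=S(t)x+\int_0^t S(t-s)F(s)\,ds$. This $U$ is continuous, periodic by construction, and unique by injectivity. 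Uniqueness also excludes resonance, since $S(t)$ is bounded.

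\textbf{Part (i).} If $\norm{S(t)}\to0$, pick $n$ with $\norm{S(nT)}=\norm{S(T)^n}<1$. Then $1$ lies outside the spectrum of $S(T)$: the spectral radius of $S(T)$ is below $1$, or equivalently a Neumann-series argument applies to $S(T)^n$ together with the factorization $I-S(T)^n=(I-S(T))\sum_{k<n}S(T)^k$. Hence $I-S(T)$ is boundedly invertible. For every $F\in\Lone$ we get $x=(I-S(T))^{-1}y_F$, with
\[
\norm{U}_{C([0,T];H)}\le C\norm{F}_{\Lone}.
\]

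\textbf{Part (ii), injectivity.} If $S(T)x=x$, then $S(nT)x=x$ for every $n$. Strong stability gives $S(nT)x\to0$, so $x=0$. This is the step where no contraction property is needed.

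\textbf{Part (ii), density.} This is the main obstacle, since $\mathrm{Ran}(I-S(T))$ need not be closed. My plan is to take
\[
\mathcal Q=\{F\in\Lone:\ y_F\in \mathrm{Ran}(I-S(T))\},
\]
which is a linear subspace, and to prove it is dense. First, the range of $I-S(T)$ is dense in $H$. For a contraction semigroup on a Hilbert space, $S(T)^*$ is also a contraction. A contraction has the same fixed points as its adjoint: if $S(T)^*z=z$, then $\langle S(T)z,z\rangle=\norm{z}^2$, and with $\norm{S(T)z}\le\norm z$ this forces $S(T)z=z$. Combined with injectivity this gives $\ker(I-S(T)^*)=\{0\}$, so the range is dense.

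Next, given $F\in\Lone$ and $\varepsilon>0$, I perturb $F$ by a correction $G$. Concretely, take $G(s)=\varphi(s)\,S(s)g$ with $\varphi$ supported near $0$ and $\int_0^T\varphi=1$, so that $y_G=S(T)g$. The goal is to achieve $y_F+y_G\in\mathrm{Ran}(I-S(T))$ with $\norm G_{\Lone}$ small.

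An easier route is to approximate $F$ by functions with $y_F$ already in the range. Choose a target $r$ in the range $\varepsilon$-close to $y_F$ and correct by $G$ with $y_G=r-y_F$. Here I would instead take $G(s)=\frac1\delta \mathbf 1_{[T-\delta,T]}(s)\,h$, for which $y_G\approx h$ as $\delta\to 0$. Getting $y_G$ exactly equal to $r-y_F$ is fiddly. So a cleaner choice is to take $G(s)=\varphi(s)(r-y_F)$ with $\varphi$ concentrated near $T$, and then absorb the small discrepancy
\[
\int_0^T\varphi(s)\,\bigl(S(T-s)-I\bigr)(r-y_F)\,ds.
\]
The density of the range, together with closure arguments of this type, gives density of $\mathcal Q$ in $\Lone$. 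I expect the bookkeeping in this density argument to be the only delicate point.
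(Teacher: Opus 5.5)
The paper quotes this proposition from \cite{Galdi} without proof, so I am judging your argument on its own terms. Everything up to the density of $\mathcal Q$ is sound: the reduction to $(I-S(T))x=y_F$, part (i) via $\norm{S(T)^n}<1$, injectivity from strong stability, and dense range from $\ker(I-S(T)^*)=\ker(I-S(T))$ for a Hilbert-space contraction.

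The gap is the final density step. With $G(s)=\varphi(s)(r-y_F)$ you get $y_{F+G}=r+d$, where $d$ is the discrepancy you display. Since $r$ lies in the subspace $\mathrm{Ran}(I-S(T))$, $r+d$ lies there iff $d$ does, and there is no reason for that. Density of the range gives only approximate membership. In the case of interest $\mathrm{Ran}(I-S(T))$ is not closed (if it were, $I-S(T)$ would be boundedly invertible and every $F\in\Lone$ would be admissible). So no closure argument can turn a small discrepancy into exact membership. What you actually need is an \emph{exact} correction: for each $h\in H$, some $G\in\Lone$ with $y_G=h$ and $\norm{G}_{\Lone}\le C\norm h$. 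In other words, you need surjectivity of the bounded map $\Psi G:=\int_0^T S(T-s)G(s)\,ds$ from $\Lone$ onto $H$. This is not a technicality: if $\Psi$ merely had dense range, $\mathcal Q=\Psi^{-1}(\mathrm{Ran}(I-S(T)))$ could shrink to $\ker\Psi$, which is not dense.

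Your concentration near $s=T$ supplies this once you iterate it, as in the proof of the open mapping theorem. For $G_h:=\delta^{-1}\mathbf 1_{[T-\delta,T]}\,h$ one has $\norm{G_h}_{\Lone}=\norm h$. One also has $\norm{\Psi G_h-h}=\norm{\delta^{-1}\int_0^\delta(S(\sigma)-I)h\,d\sigma}\le\tfrac12\norm h$ for $\delta=\delta(h)\in(0,T]$ small enough. Put $h_0=h$, $G_k:=G_{h_k}$ and $h_{k+1}:=h_k-\Psi G_k$. Then $\norm{h_k}\le 2^{-k}\norm h$, so $G:=\sum_k G_k$ converges in $\Lone$ with $\norm{G}_{\Lone}\le 2\norm h$. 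Continuity of $\Psi$ gives $\Psi G=\lim_N(h_0-h_{N+1})=h$ exactly. Taking $h=r-y_F$ with $\norm{r-y_F}<\varepsilon$ gives $F+G\in\mathcal Q$ and $\norm{G}_{\Lone}<2\varepsilon$, which is the density you want.

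Equivalently, the adjoint $\Psi^*z=S(T-\cdot)^*z\in L^\infty(0,T;H)$ has norm at least $\norm z$, since $S(\cdot)^*$ is strongly continuous with $S(0)^*=I$. Hence $\Psi$ is onto, hence open, and preimages of dense sets under open maps are dense. With this step inserted, your proof of (ii) is complete.
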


In Hilbert space, the polynomial stability regime is characterized by the celebrated result of Borichev and Tomilov.
\begin{proposition}[Theorem 2.4, \cite{BT}]\label{prop:BT}
Let $\alpha>0$. Then the following are equivalent:
\begin{enumerate}[label={\rm(\alph*)},leftmargin=2.2em]
\item $0\in\rho(A)$ and ~$\norm{S(t)A^{-1}}_{\cL(H)}\le C(1+t)^{-1/\alpha}$ ~for $t\ge 0$;
\item $\ii\R\subset\rho(A)$ and~
$\norm{(\ii sI-A)^{-1}}_{\cL(H)}\le C(1+|s|)^{\alpha},\qquad s\in\R.$
\end{enumerate}
\end{proposition}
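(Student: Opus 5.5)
The statement immediately above is Proposition~\ref{prop:BT}. I would prove the two implications separately. The direction (a)$\Rightarrow$(b) is a soft semigroup argument. The converse (b)$\Rightarrow$(a) is where the Hilbert space structure, via Plancherel, is needed. Throughout, $M:=\sup_t\norm{S(t)}$, and $R(\lambda):=(\lambda I-A)^{-1}$.

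\emph{(a)$\Rightarrow$(b).} For $\lambda=\varepsilon+\ii s$ with $\varepsilon>0$, boundedness of $S(t)$ puts $\lambda\in\rho(A)$. For every $\tau>0$ one has the truncated Laplace identity
\[
R(\lambda)x=\int_0^\tau e^{-\lambda t}S(t)x\,dt+e^{-\lambda\tau}S(\tau)R(\lambda)x .
\]
I would rewrite the last term using $AR(\lambda)=\lambda R(\lambda)-I$, so that $S(\tau)R(\lambda)=S(\tau)A^{-1}(\lambda R(\lambda)-I)$. Using (a), this gives
\[
\norm{R(\lambda)}\le M\tau+C\tau^{-1/\alpha}\bigl(|\lambda|\,\norm{R(\lambda)}+1\bigr).
\]
Choosing $\tau\simeq(1+|\lambda|)^{\alpha}$ with $C\tau^{-1/\alpha}|\lambda|\le 1/2$ lets me absorb the resolvent term. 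The result is $\norm{R(\varepsilon+\ii s)}\le C(1+|s|)^\alpha$, uniformly in $\varepsilon\in(0,1]$. Since $\norm{R(\lambda)}\ge \operatorname{dist}(\lambda,\sigma(A))^{-1}$, no point of $\ii\R$ can lie in $\sigma(A)$. Letting $\varepsilon\downarrow0$ then yields (b).

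\emph{(b)$\Rightarrow$(a).} Fix $\gamma\ge\alpha$ (to be chosen below). I plan three steps.
\begin{enumerate}[leftmargin=2.2em]
\item A Neumann series argument extends the resolvent bound to the region $\{|\mathrm{Re}\,\lambda|\le c(1+|\mathrm{Im}\,\lambda|)^{-\alpha}\}$.
\item The resolvent identity $R(\ii s)A^{-1}=(\ii s)^{-1}\bigl(R(\ii s)+A^{-1}\bigr)$ improves the growth by one power of $|s|$. Applying it repeatedly, with fractional powers for the non-integer part (moment inequality), shows that $s\mapsto R(\ii s)A^{-\gamma}$ is bounded on $\R$. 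The same holds for $A^*$, since $\norm{R(\ii s)^*}=\norm{R(\ii s)}$.
\item Plancherel's theorem turns these boundary resolvent bounds into $L^2$ bounds for $y=A^{-\gamma}x$. Concretely, $t\mapsto e^{-\varepsilon t}S(t)y$ has Fourier transform $R(\varepsilon+\ii s)y$. I would split $R(\lambda)A^{-\gamma}$ into a part that is $L^2$ in $s$ plus a harmless term, aiming for $\int_0^\infty\norm{S(t)A^{-\gamma}x}^2dt\le C\norm{x}^2$ uniformly in $\varepsilon$, and likewise for the adjoint semigroup.
\end{enumerate}
With these in hand, the Gearhart--Pr\"uss trick
\[
t\,\langle S(t)A^{-2\gamma}x,z\rangle=\int_0^t\bigl\langle S(r)A^{-\gamma}x,\;S(t-r)^*(A^*)^{-\gamma}z\bigr\rangle\,dr,
\]
followed by Cauchy--Schwarz, yields $\norm{S(t)A^{-2\gamma}}\le C/t$. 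The moment inequality then converts this into a decay rate for $S(t)A^{-1}$.

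The main obstacle is obtaining the \emph{sharp} exponent $1/\alpha$. The scheme above loses a factor, because the duality step places $A^{-\gamma}$ on both sides, so it only gives $\norm{S(t)A^{-1}}\lesssim t^{-1/(2\gamma)}$, i.e.\ rate $t^{-1/(2\alpha)}$ at best. To recover the optimal rate, I would try first to prove $\norm{S(t)A^{-\alpha}}\le C/t$ directly; the moment inequality then gives exactly $t^{-1/\alpha}$. The idea is to differentiate in $s$: since $\tfrac{d}{ds}R(\ii s)=-\ii R(\ii s)^2$, the Fourier transform of $t\,S(t)y$ is essentially $R(\ii s)^2y$. I would then bound $\norm{R(\ii s)^2A^{-\alpha}x}$ in $L^2_s$ by factoring it as $R(\ii s)\cdot R(\ii s)A^{-\alpha}$, using the boundedness from step~2 (with $\gamma=\alpha$) for one factor and a Plancherel/duality argument for the other. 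I expect this weighted $L^2$ estimate to be the crux; in the worst case it may require the more refined functional calculus estimates of \cite{BT}.
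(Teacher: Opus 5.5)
The paper gives no proof of this proposition; it is quoted from Borichev--Tomilov, and only the direction (a)$\Rightarrow$(b) is used (in the proof of Theorem~\ref{thm:main}). Your argument for that direction is correct and complete.

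The converse, however, is not proved. As you note yourself, the symmetric Datko--Gearhart--Pr\"uss duality with $A^{-\gamma}$, $\gamma\ge\alpha$, on both sides gives only $\norm{S(t)A^{-1}}\lesssim t^{-1/(2\alpha)}$, which is weaker than (a). The repair you propose cannot work. By Plancherel, a bound $\int_\R\norm{R(\varepsilon+\ii s)^2A^{-\alpha}x}^2\,ds\le C\norm{x}^2$ uniform in $\varepsilon$ is equivalent to $\int_0^\infty t^2\norm{S(t)A^{-\alpha}x}^2\,dt\le C'\norm{x}^2$, and this is false under (b). Take $H=\ell^2$ and $Ae_k=\lambda_k e_k$ with $\lambda_k=-k^{-\alpha}+\ii k$. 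Then (b) holds, yet
\[
\int_0^\infty t^2\norm{S(t)A^{-\alpha}e_k}^2dt=|\lambda_k|^{-2\alpha}k^{3\alpha}/4\sim k^{\alpha}/4\to\infty .
\]
The same blow-up occurs on the axis: $\int_\R\norm{R(\ii s)^2A^{-\alpha}e_k}^2ds\sim k^{\alpha}$. Factoring the integrand as $R\cdot RA^{-\alpha}$ does not help either. The Plancherel bound $\int_\R\norm{R(\varepsilon+\ii s)z}^2ds$ is only $O(\varepsilon^{-1})$, and the vector $R(\ii s)A^{-\alpha}x$ depends on $s$.

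The missing idea is to localize the orbit in time at scale $\tau$, instead of splitting $A^{-\alpha}$ between two factors. Fix $\chi\in C^\infty(\R)$ with $\chi=0$ on $(-\infty,0]$ and $\chi=1$ on $[1,\infty)$. Put $w(t)=e^{-\varepsilon t}\chi(t/\tau)S(t)A^{-\alpha}x$. Then $w'=(A-\varepsilon)w+A^{-\alpha}h$ with $h(t)=\tau^{-1}e^{-\varepsilon t}\chi'(t/\tau)S(t)x$, so
\[
\hat w(s)=R(\varepsilon+\ii s)A^{-\alpha}\,\hat h(s),\qquad \norm{h}_{L^2(\R;H)}^2\le \norm{\chi'}_\infty^2M^2\norm{x}^2/\tau .
\]
Your steps 1--2, together with $\norm{R(\lambda)}\le M/\mathrm{Re}\,\lambda$, give $K:=\sup_{\mathrm{Re}\,\lambda\ge0}\norm{R(\lambda)A^{-\alpha}}<\infty$. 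Plancherel then yields $\int_\tau^\infty e^{-2\varepsilon t}\norm{S(t)A^{-\alpha}x}^2dt\le K^2\norm{h}_{L^2}^2\le C\norm{x}^2/\tau$, and you may let $\varepsilon\downarrow0$. Next, $\norm{S(2\tau)A^{-\alpha}x}\le M\norm{S(t)A^{-\alpha}x}$ for $t\in[\tau,2\tau]$. Hence $\tau\norm{S(2\tau)A^{-\alpha}x}^2\le M^2C\norm{x}^2/\tau$, that is, $\norm{S(t)A^{-\alpha}}\le C/t$. The moment inequality then gives (a); if $\alpha<1$, apply it to $S(t)A^{-n\alpha}=(S(t/n)A^{-\alpha})^n$ with $n\alpha\ge1$. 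One factor $\tau^{-1/2}$ comes from the cutoff derivative and the other from the semigroup property. So a single bounded multiplier suffices, and no second square-function estimate is needed.
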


The next section provides the missing ``middle case'': a concrete characterization of the forcing set $\mathcal Q \subset \Lone$ 
 on which periodic well-posedness holds under polynomial stability of $S(t)$.

\section{Polynomial Stability Yields A Well-posedness Class}
In the context of $L^2(0,T;H)$-based spaces, the $T$-periodic problem is naturally reduced to the ``sample lattice'' $\{\ii n\omega :n\in\Z\}$, where $\omega=2\pi/T$. In this context, we may work with natural forcing classes of the form $H^m_{\#}(0,T;H)$, namely, the $T$-periodic space of functions whose $m$ distributional time derivatives take values in $L^2(0,T;H)$, $m \ge 0$. The norm on this space can be taken as 
$$
\norm{F}_{H^m_{\#}(0,T;H)}^2 = \sum_{n\in\Z}(1+|n|)^{2m}\norm{F_n}_H^2,$$
with $F_n$ being the $n$th Fourier coefficient at frequency $n\omega$.

Our simple observation is that the spectral polynomial stability requirement gives a quantitative sampled resolvent estimate, and therefore {\em quantitative control of the  Fourier multiplier.} This rather immediately yields the missing ``middle case'' between the two endpoint cases in Proposition \ref{prop:Galdi}. It also makes precise a central, but implicit, point in the direct PDE constructions of \cite{MMSW}:  To recover a finite-energy periodic response, the forcing must be taken in a more regular class than $L^1(0,T;H)$; moreover, that additional regularity is determined by the resolvent growth exponent on $i\mathbb R$.
\begin{theorem}\label{thm:main}
Assume that $S(t)$ is polynomially stable of order $1/\alpha$ for some $\alpha>0$. Then, for every $T>0$ and every $m\ge 1$, \eqref{1} is periodically well-posed for $F\in \Hper{m+\alpha}$. Moreover, we have the data-to-solution estimate
\begin{equation}\label{eq:main-est}
\norm{U}_{\Hper{m}}\le C_T\norm{F}_{\Hper{m+\alpha}}.
\end{equation}
\end{theorem}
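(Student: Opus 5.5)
The plan is to work entirely on the Fourier side in time. By Proposition \ref{prop:BT}, polynomial stability of order $1/\alpha$ gives $\ii\R\subset\rho(A)$ and $\norm{(\ii sI-A)^{-1}}_{\cL(H)}\le C(1+|s|)^\alpha$. Sampling at $s=n\omega$ with $\omega=2\pi/T$ yields
\[
\norm{(\ii n\omega I-A)^{-1}}_{\cL(H)}\le C(1+|n|\omega)^\alpha\le C_T(1+|n|)^\alpha,\qquad n\in\Z.
\]
For $F\in\Hper{m+\alpha}$ with coefficients $F_n$, I define the candidate solution through the Fourier multiplier
\[
U_n:=(\ii n\omega I-A)^{-1}F_n,\qquad U(t)=\sum_{n\in\Z}U_n e^{\ii n\omega t}.
\]
Then $(1+|n|)^{2m}\norm{U_n}_H^2\le C_T^2(1+|n|)^{2(m+\alpha)}\norm{F_n}_H^2$. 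Summing over $n$ gives \eqref{eq:main-est} directly.

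Next I verify that $U$ is an admissible periodic mild solution. Since $m\ge1$, $\Hper{m}\hookrightarrow C_{\#}([0,T];H)$: by Cauchy--Schwarz, $\sum_n\norm{U_n}\le(\sum_n(1+|n|)^{-2})^{1/2}\norm{U}_{\Hper{1}}$, so the Fourier series converges absolutely and uniformly. Each $U_n\in\cD(A)$ with $\ii n\omega U_n=AU_n+F_n$, so $U$ solves $U_t=AU+F$ in the strong/distributional sense. To pass to the mild formulation, I test against the truncated series: $U^N$ is a classical solution with forcing $F^N$. Hence it satisfies the variation of parameters formula $U^N(t)=S(t)U^N(0)+\int_0^tS(t-\tau)F^N(\tau)\,d\tau$. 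Letting $N\to\infty$, using $U^N\to U$ uniformly and $F^N\to F$ in $L^2\subset L^1$ together with boundedness of $S$, gives the mild formula for $U$, with $U(0)=U(T)$.

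The remaining step is uniqueness among $C_{\#}([0,T];H)$ mild solutions, and this is the point needing some care, since a competing solution is not a priori in $\Hper{m}$. Let $W$ be the difference of two periodic mild solutions, so $W(t)=S(t)W(0)$ with $W(T)=W(0)$. I take the $n$th Fourier coefficient of the mild formula, or equivalently use that $\int_0^t W$ lies in $\cD(A)$ with $A\int_0^tW=W(t)-W(0)$. Integrating against $e^{-\ii n\omega t}$ then gives $(\ii n\omega I-A)W_n=0$. Invertibility on $\ii\R$ forces $W_n=0$ for all $n$, hence $W\equiv0$ by continuity. Alternatively, I could argue that $W(0)=S(kT)W(0)\to0$ as $k\to\infty$, since polynomial stability implies strong stability (by density of $\cD(A)$ and the bound on $S(t)A^{-1}$). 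Either route closes the argument; I expect only the justification of the mild-to-Fourier passage to require attention.
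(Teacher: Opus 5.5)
Your proposal is correct and follows the paper's proof essentially step by step: the sampled Borichev--Tomilov resolvent bound, the Fourier multiplier $U_n=(\ii n\omega I-A)^{-1}F_n$, the limit $N\to\infty$ in the variation-of-parameters formula for the partial sums $U^N$, and uniqueness from invertibility of $\ii n\omega I-A$. Your justification that a periodic mild solution has coefficients satisfying $(\ii n\omega I-A)W_n=0$, via $A\int_0^t W=W(t)-W(0)$, supplies a detail the paper only asserts, and your alternative uniqueness argument ($W(0)=S(kT)W(0)\to 0$ by the strong stability implied by polynomial stability) is also valid.
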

\begin{remark} We require that $m\ge 1$ so that $H^m_{\#}(0,T;H) \hookrightarrow C_{\#}([0,T];H)$, and hence time-traces are well-defined and  the periodicity condition $U(T)=U(0)$ is well-defined.\end{remark}
\begin{proof}
By Proposition~\ref{prop:BT}, polynomial stability of $S(t)$ ensures that $\ii\R\subset\rho(A)$ and
\[
\norm{(\ii sI-A)^{-1}}\le C(1+|s|)^{\alpha},\qquad s\in\R.
\]
Hence there is a constant $M_T>0$ such that
\begin{equation}\label{eq:lattice-bound}
\norm{(\ii n\omega I-A)^{-1}}\le M_T(1+|n|)^{\alpha},\qquad n\in\Z.
\end{equation}

Now decompose $F$ into its temporal Fourier modes as
\[
F(t)=\sum_{n\in\Z}F_n e^{\ii n\omega t}\in\Hper{m+\alpha}.
\]
Define the mode-by-mode solvers as
\[
U_n:=(\ii n\omega I-A)^{-1}F_n,\qquad  \text{with} \qquad U(t):=\sum_{n\in\Z}U_n e^{\ii n\omega t}.
\]
Using \eqref{eq:lattice-bound},
\[
(1+|n|)^{2m}\norm{U_n}_H^2\le M_T^2(1+|n|)^{2(m+\alpha)}\norm{F_n}_H^2.
\]
Summing over $n$ yields
\[
\sum_{n\in\Z}(1+|n|)^{2m}\norm{U_n}_H^2
\le M_T^2\sum_{n\in\Z}(1+|n|)^{2(m+\alpha)}\norm{F_n}_H^2,
\]
so $U\in \Hper{m}$, and the convergence of the series respects the $H^m$ topology; thus \eqref{eq:main-est} follows.

To identify $U$ as a mild solution, we introduce the Fourier partial sums
\[
F^N(t):=\sum_{|n|\le N}F_n e^{\ii n\Om t},\qquad U^N(t):=\sum_{|n|\le N}U_n e^{\ii n\Om t}.
\]
Each $U^N$ is a finite-mode periodic solution and satisfies the variation-of-parameters formula exactly on $(0,T)$:
\[
U^N(t)=S(t)U^N(0)+\int_0^t S(t-s)F^N(s)\,ds.
\]

First, it is immediate that $F^N\to F$ in $L^2(0,T;H)$ and $U^N\to U$ in $L^2(0,T;H)$. Since we have $U^N \in H^m_{\#}(0,T;H)$ for $m \ge 1$, the standard Sobolev embedding in Bochner spaces implies that $U^N(s) \to U(s) \in H$ uniformly for $s \in [0,T]$. Then the boundedness of $S(t)$ on $[0,T]$ implies that both terms on the right-hand side converge in $C([0,T];H)$; thus one may pass to the limit in the variation-of-parameters identity. The $T$-periodicity of each $U^N$ is thus preserved in the limit, and hence $U$ is a $T$-periodic mild solution.

For uniqueness, we invoke linearity and consider a mild solution with Fourier coefficients  satisfying
\[
(\ii n\Om I-A)U_n=0,\qquad n\in\Z.
\]
Since $\ii n\Om\in\rho(A)$ for every $n$, one obtains $U_n=0$ for all $n$, hence $U\equiv 0 \in L^2(0,T;H)$.
\end{proof}

We reiterate that Theorem~\ref{thm:main} gives a quantitative improvement of Proposition \ref{prop:Galdi}'s strong-stability conclusion (ii). Namely, polynomially stable semigroups are strongly stable, so what is gained here is an explicit derivative-loss law: The  exponent which governs resolvent growth also governs  Sobolev loss in the periodic data-to-solution map. That loss determines a concrete, dense, explicitly characterized subclass $\mathcal Q := H^{m+\alpha}_{\#}(0,T;H)$ of admissible forcings. The use of $L^2$-based spaces is a consequence of the Fourier method. Since periodic trigonometric polynomials are also dense in $L^1(0,T;H)$ and belong to $H^r_{\#}(0,T;H)$ for every $r\ge 0$, each class $H^{m+\alpha}_{\#}(0,T;H)$ is a concrete dense subclass. 

\section{Application: Canonical Heat-Wave Interactions}
As a concrete illustration of Theorem~\ref{thm:main}, we consider a canonical PDS model given by a coupled heat-wave system. Such models have been studied extensively in several geometric and coupling configurations. Here, heat-wave dynamics serve as a representative example rather than a comprehensive case study. We may isolate a well-known PDS system, invoke an available polynomial stability result, and then apply our Theorem~\ref{thm:main} to obtain a novel periodic well-posedness conclusion for a characterized set of forcings $\mathcal Q$. A more systematic comparison across classes of PDS will be undertaken later.

We focus on the standard interface-coupled configuration in which a heat equation acts on an $n$-dimensional bounded domain with smooth (or sufficiently regular) boundary in dimensions $n=2,3$, a wave equation acts on a comparable $n$-dimensional domain, and the two dynamics are strongly coupled across a common, smooth $(n-1)$-dimensional interface. This is a canonical transmission problem studied, e.g., in \cite{Zhang2003,RZZ,AvalosTriggiani2013,ALT}. Under classical {\em geometric control conditions} on the domains, polynomial decay rates can be obtained for the associated solution semigroup, with the latter yielding a decay law of the form
\[
\|{S(t)A^{-1}}\|_{\mathcal L(H)} \lesssim t^{-1/2};
\]
that is, the exponent $\alpha=2$ in the notation of Definition~\ref{1}. Theorem~\ref{thm:main} therefore gives the periodic implication for \eqref{equation}
\[
F\in H^{m+2}_{\#}(0,T;H)\quad\Longrightarrow\quad U\in H^m_{\#}(0,T;H).
\]
In particular, for finite-energy periodic solutions, one may take $m=1$ and recover periodic well-posedness for forcing in $H^3_{\#}(0,T;H)$, with the interpretation of $U(T)=U(0)$ holding in the sense of $C([0,T];H)$. Noting the Sobolev embedding in one dimension in time variable, one may also be more minimal: $$F\in H^{5/2^+}_{\#}(0,T;H)\quad\Longrightarrow\quad U\in H^{1/2^+}_{\#}(0,T;H).$$ In situations where sharper analysis is available, an optimal rational decay result \cite{ALT} corresponds to $\alpha=1$, yielding the stronger conclusion
$$F\in H^{m+1}_{\#}(0,T;H)\quad\Longrightarrow\quad U\in H^m_{\#}(0,T;H).$$

Several discussion points are in order. First, Theorem~\ref{thm:main} treats the heat-wave forcing monolithically, that is, as a single forcing term with values in the full first-order finite-energy state space. The temporal regularity is therefore imposed globally across all system components. By contrast, in \cite{MMSW}, periodic well-posedness is established by direct time-domain methods and the forcing regularity is measured component-wise. This produces different temporal losses for different components of the coupled system. If one compresses the resulting estimates of \cite{MMSW} into a single global forcing class, the  corresponding well-posedness requirement is  of order  $F \in H^4_{\#}(0,T;H)$, a more stringent class than what is obtained from $m=1$ above. This scenario does not indicate any contradiction, as the authors of \cite{MMSW} explicitly describe several places in their constructions where additional derivatives are lost, for instance through trace/embedding constants, so the numerology in \cite{MMSW} is not expected to be sharp. Secondly, these two approaches to periodic well-posedness operate under rather different geometric assumptions. Going through polynomial stability requires  strong observability estimates, and hence relies on classical (and demanding) geometric control hypotheses. The direct periodic construction in \cite{MMSW}, on the other hand, exploits time-periodicity and treats observability in a different way, which allows  broader and more irregular geometries. Thus the direct approach---rather than passing through polynomial stability---yields a more flexible geometric setting at the price of larger derivative losses. 

We also note that both of the aforementioned approaches concern only temporal loss of derivatives. In many concrete models, temporal regularity may be traded for spatial regularity by using the equations themselves. A sharp result along these lines will be strongly affected by the underlying geometry/interface conditions and by available elliptic regularity results, and will be pursued in later work. 

\section{Conclusions}
We believe the picture in Theorem~\ref{thm:main} is simple but useful. Uniform stability gives unconditional periodic well-posedness; strong stability gives only an abstract dense set of non-resonance forcings; and polynomial stability gives an explicit characterization of that class together with quantified derivative losses. In this way the periodic theory becomes aligned with the large semigroup literature on stability types and decay. More importantly, the theorem clarifies that the loss of derivatives is not a technical artefact of a particular PDE argument: it is encoded in sampled resolvent growth.

From the point of view of resonance, several distinct scenarios may occur in PDS, when components/regions of the system are undamped.
\begin{enumerate}[label={\rm(\roman*)},leftmargin=2.2em]\setlength\itemsep{.005cm}
\item An abstract class for periodic well-posedness may exist, but without any usable description. 
\item Polynomial stability yields a concrete non-resonance class, with a derivative loss dictated by resolvent growth.
\item Observed numerology $(\alpha,m)$ may depend on geometry, interface, and dissipation mechanisms (exactly as in \cite{MMSW}), and these factors dictate the ability to obtain observability estimates. 
\item Outside of finite-loss well-posedness classes, one expects more delicate failures, including: observability breakdown, Fourier divergence, infinite loss of derivatives, and genuine resonance; such failures remain at the heart of the broader PDS resonance problem.
\item  For weaker notions of solution (e.g., very weak solutions \cite{MMSW}), their interpretation from the point of view of resonance is an interesting question for future consideration. 
\end{enumerate}

In light of these points, the present note is one step in a much larger program. Our result here, taken with Proposition \ref{prop:Galdi}, certainly does not settle the  resonance problem; yet, it provides an explicit and broadly applicable bridge that completes the story of stability $\to$ periodic well-posedness. It suggests a direct route by which spectral information about the dynamics can inform the analysis of resonance in PDS.

\vskip.2cm
\noindent {\textbf{Acknowledgments}}: This work was partially supported by the Croatian Science Foundation under project IP-2022-10-2962(B.M.).  Muha was supported by the European Union – NextGenerationEU through the National Recovery and Resilience Plan 2021-2026. Institutional grant of University of Zagreb Faculty of Science IK IA 1.1.3. Impact4Math, and by Croatia-USA bilateral grant “The mathematical framework for the diffuse interface method applied to coupled problems in fluid dynamics”. Galdi was partially supported by NSF DMS-2307811.   Webster was partially supported by NSF DMS-2307538.

\end{document}